\setlist{
  itemsep=2pt,
  topsep=3pt,
  parsep=0pt
}
\tikzstyle{wbullet}=[circle, draw=black, fill=white, thick, inner sep=0pt, minimum size=1.5mm]
\tikzstyle{wbullet}=[circle, draw=black, fill=white, thick, inner sep=0pt, minimum size=1.5mm]
\tikzstyle{bbullet}=[circle, draw=black, fill=black, inner sep=0pt, minimum size=1.5mm]
\tikzstyle{cross}=[circle, draw=black, fill=white, inner sep=0pt, minimum size=1.5mm]
\theoremstyle{plain}
\newtheorem{theorem}{Theorem}[section]
\newtheorem{lemma}[theorem]{Lemma}
\newtheorem{corollary}[theorem]{Corollary}
\newtheorem{proposition}[theorem]{Proposition}
\theoremstyle{definition}
\newtheorem{definition}[theorem]{Definition}
\newtheorem{remark}[theorem]{Remark}
\newtheorem{claim}[theorem]{Claim}
\newtheorem{question}[theorem]{Question}
\newcommand{\bA}{\mathbb{A}}
\newcommand{\bC}{\mathbb{C}}
\newcommand{\bP}{\mathbb{P}}
\newcommand{\bQ}{\mathbb{Q}}
\newcommand{\bR}{\mathbb{R}}
\newcommand{\bZ}{\mathbb{Z}}
\newcommand{\cA}{\mathcal{A}}
\newcommand{\cC}{\mathcal{C}}
\newcommand{\cF}{\mathcal{F}}
\newcommand{\cM}{\mathcal{M}}
\newcommand{\cN}{\mathcal{N}}
\newcommand{\cO}{\mathcal{O}}
\newcommand{\cP}{\mathcal{P}}
\newcommand{\cS}{\mathcal{S}}
\newcommand{\cW}{\mathcal{W}}
\begin{document}
\title{On infinite accumulation points of log canonical volumes}
\author{Weili Shao}
\address{School of  Mathematical Sciences, Xiamen University, Xiamen, Fujian 361005, P.~R.~China}
\email{wlshaomath@stu.xmu.edu.cn}
\thanks{}
\subjclass[2010]{Primary:~14J29; Secondary:~14J17}
\date{\today}
\dedicatory{}
\keywords{surfaces of log general type, geography of volumes, infinite accumulation points}

\begin{abstract}
    In this paper, we demonstrate a new phenomenon in the geography of volumes: for every integer $d\geq 2$, the set of volumes of $d$-dimensional projective log canonical varieties has an infinite accumulation point.
\end{abstract}

\maketitle

\pagestyle{myheadings}\markboth{\hfill \footnotesize  W. Shao\hfill}{\hfill \footnotesize On infinite accumulation points of log canonical volumes\hfill}

\tableofcontents

\section{Introduction}

We work over $\bC$ and use the standard notation and definitions of \cite{KM98,BCHM}.

The study of volumes is closely related to boundedness problems in birational geometry. Given a positive integer $d$ and a set $\cC\subset [0,1]$, set
\begin{itemize}
    \item $\cS(d,\cC)$: the set of projective log canonical pairs $(X, B)$ such that $\dim X=d$, $K_X+B$ is big and $B\in \cC$;
    \item $\mathrm{Vol}(d,\cC)$: the set of volumes $\mathrm{vol}(X, K_X+B)$ for pairs $(X, B)\in \cS(d,\cC)$. 
\end{itemize}

A fundamental result asserts that when $\cC$ satisfies the descending chain condition (DCC), the volume set $\mathrm{Vol}(d,\cC)$ also satisfies the DCC \cite{Alexeev94boundedness, HMX14}. 

When $\cC$ is finite, Alexeev and W.~Liu \cite[Theorem~1.1]{AlexeevLiu19accpoint} proved that in dimension two the accumulation points of $\mathrm{Vol}(2,\cC)$ arise precisely from accessible non-klt centres (see \cite[Definition~2.3]{AlexeevLiu19accpoint}); in higher dimensions, Filipazzi \cite[Theorem~1.3]{Filipazzi18volume} proved discreteness for volumes of $\epsilon$-log canonical pairs with coefficient set $\cC$.
These results reveal a close connection between the accumulation phenomena of log canonical volumes and the singularities of the underlying varieties.

In this paper, we focus on a stronger accumulation phenomenon, namely, points that remain accumulation points under arbitrarily many iterations. To make this precise, we introduce the notion of infinite accumulation point and recall the notion of accumulation complexity from \cite[Definitions 6.1 and 6.3]{AlexeevLiu19accpoint}.

\begin{definition}\label{Def1acc.comp.}
Let $\cA\subseteq \bR$ be a non-empty subset.
    \begin{itemize}
        \item[(1)] Let $\cA^{(0)}:=\cA$, and for $k\ge 1$ let $\cA^{(k)}$ be the set of accumulation points of $\cA^{(k-1)}$. If $a\in \cA^{(k)}$ for all $k\geq 1$, then $a$ is called an \textit{infinite accumulation point} of $\cA$ and we denote by $\cA^{(\infty)}$ the set of all infinite accumulation points of $\cA$. 
        \item[(2)] The set $\cA$ has \textit{finite accumulation complexity} if $\cA^{(k)}=\emptyset$ for $k\gg 0$. Otherwise, $\cA$ has \textit{infinite accumulation complexity}.
    \end{itemize}
\end{definition}

If a set has an infinite accumulation point, then it has infinite accumulation complexity. The converse does not hold in general. For example, \begin{align*}
\left\{\sum_{i=1}^{n}(1-\frac{1}{r_i})|r_i\in\bZ_{\geq 2}, n\in \bZ_{\geq 1}\right\}.
\end{align*}

In dimension two, Alexeev and W.~Liu \cite[Theorem 6.2]{AlexeevLiu19accpoint} showed that $\mathrm{Vol}(2,\{0\})$ has infinite accumulation complexity. Our main result shows that, in every dimension $d\ge 2$, the volume set actually admits an infinite accumulation point.

\begin{theorem}\label{Main-theorem}
For every integer $d\geq 2$ and every DCC set $\cC\subseteq [0,1]$, the set $\mathrm{Vol}(d, \cC)$ has an infinite accumulation point. 
\end{theorem}

This gives a negative answer to the question raised by Alexeev and W. Liu in \cite[Remark 6.4]{AlexeevLiu19accpoint}.

\begin{corollary}\label{Corollary}
    For every integer $d\geq 2$ and every DCC set $\cC\subseteq [0,1]$, there exists a positive real number \(M_d\), depending only on $d$, such that $\mathrm{Vol}(d, \cC)\cap (0,M_d]$ has infinite accumulation complexity.
\end{corollary}

The key geometric phenomenon occurs already in dimension two: the presence of a special non-klt centre gives rise to an infinite accumulation point.
For clarity, we state the result on the ample model.

\begin{theorem}\label{Main-theorem-geometric}
    Let $(X, B)$ be a $\bQ$-factorial projective log canonical surface with $K_X+B$ ample. Suppose that there exists a closed point $x$ on an irreducible component $B_0$ of $B^{=1}$ such that $x$ is a non-klt centre of $(X, B)$. Then
    \begin{align*}
        \mathrm{vol}(X, K_X+B)\in \mathrm{Vol}^{(\infty)}(2, \cC_B),
    \end{align*} where $\cC_B$ is the coefficient set of $B$.
\end{theorem}

We briefly outline the proof.

\noindent\textbf{Structure of the proof.}
We first prove a surface statement (Proposition~\ref{pre-main-theorem}) showing that if $(X,B)$ is a surface pair such that $B^{=1}$ contains two components $B_1,B_2$ intersecting transversally at a smooth point $x$ with $(K_X+B)\cdot B_1=0$ and $(K_X+B)\cdot B_2>0$, then $\mathrm{vol}(X,K_X+B)\in \mathrm{Vol}^{(\infty)}(2,\cC_B)$.

The proof of Proposition \ref{pre-main-theorem} proceeds as follows. Starting with the smooth germ $(X\ni x,B)$, we apply the toric modification induced by the subdivision of a smooth fan. This produces surfaces $W$ together with divisors $\bar B_W$ such that $\mathrm{vol}(W,K_W+\bar B_W)<\mathrm{vol}(X,K_X+B)$.
For a suitable subdivision, $K_W+\bar B_W$ is big and nef, and its volume equals $(K_W+\bar{B}_W)^2$. Contracting a component $C_0$ of $\bar B_W$ to preserve the coefficient set yields pairs $(Z,B_Z)\in \mathcal S(2,\cC_B)$ with the same volume $\mathrm{vol}(W,K_W+\bar B_W)$, and we show that the set of these volumes has $\mathrm{vol}(X,K_X+B)$ as an infinite accumulation point.

We then reduce Theorem~\ref{Main-theorem-geometric}
to Proposition~\ref{pre-main-theorem} by a birational modification. Theorem~\ref{Main-theorem} follows from
Theorem~\ref{Main-theorem-geometric} by taking the product with a fixed variety of general type and using the product formula for volumes.

\vspace{2pt}

\vspace{4pt}

\noindent{\bf Acknowledgments.} 
The author is grateful to his advisor, Professor Wenfei Liu, for suggesting this problem, and for helpful discussions and advice. In particular, he explained the ideas of \cite{AlexeevLiu19accpoint} to the author. The revision was finished during the author's time as a visiting student at Kyoto University. The author would like to thank his mentor Professor Osamu Fujino for his hospitality. The author also thanks the anonymous referees for many helpful comments. This work is supported by National Natural Science Foundation of China (No. 12571046) and China Scholarship Council (No. 202506310055).

\vspace{4pt}

\section{Toric construction}

\subsection{Preliminaries}

We recall some basic notions from toric geometry, see \cite{CoxLittleSchenck11Toricvarieties}.
Let $N\simeq \bZ^n$ be a lattice, and set $N_{\bR}:=N\otimes_{\bZ}\bR\simeq \bR^n$.

Throughout this paper, by a cone we mean a \textit{strongly convex rational polyhedral cone} $\sigma\subset N_\bR$, that is,
\begin{itemize}
    \item there are finitely many $u_1,u_2,\dots, u_d\in N$ such that
    \begin{align*}
        \sigma=\bR_{\geq 0}u_1+\dots +\bR_{\geq 0}u_d,
    \end{align*}
    \item and $\sigma\cap (-\sigma)=\{0\}$, i.e., $\sigma$ contains no lines.
\end{itemize}
For $u_1,u_2,\dots, u_d\in N$, we write $\mathrm{Cone}(u_1,\dots,u_d):=\bR_{\geq 0}u_1+\dots +\bR_{\geq 0}u_d$.

A one-dimensional face of a cone is called an \textit{edge}. 
A \textit{ray generator} of an edge is the first lattice point along the edge; in particular, it is a primitive lattice vector. 
By \cite[Lemma 1.2.15]{CoxLittleSchenck11Toricvarieties}, every cone is generated by the ray generators of its edges. These are called the \textit{minimal generators} of the cone. 
A cone is called \textit{simplicial} if its minimal generators are linearly independent over $\bR$. Note that in dimension two, every cone is simplicial.

A \textit{fan} $\Sigma$ is a collection of cones $\sigma\subset N_\bR$ satisfying
\begin{itemize}
    \item Each face $\tau$ of a cone $\sigma\in \Sigma$ belongs to $\Sigma$;
    \item The intersection of two cones in $\Sigma$ is a face of each.
\end{itemize}

Given a simplicial cone $\sigma\subset N_\bR$ with minimal generators $u_1,\dots,u_d$, we denote $N_\sigma$ for the lattice $(\bR u_1+\dots +\bR u_d)\cap N$. The \textit{multiplicity} of $\sigma$ is the index of the sublattice $\bZ u_1+\dots+\bZ u_d$ in $N_\sigma$,
\begin{align*}
    \mathrm{mult}(\sigma):=[N_\sigma:\bZ u_1+\dots +\bZ u_d].
\end{align*}
For basic properties of the multiplicity of a simplicial cone, see \cite[Proposition 11.4.8]{CoxLittleSchenck11Toricvarieties}.

From now on, we assume that $n=2$ and $N=\bZ e_1+\bZ e_2$, where $e_1,e_2$ is the standard basis of $\bR^2$. The following lemma computes the multiplicity.

\begin{lemma}\label{multiplicity-lemma}
    Let $\sigma \subseteq N_\mathbb{R}$ be a cone.
    \begin{itemize}
        \item[(1)] If $\sigma$ is one-dimensional with a minimal generator $u\in N$, then $\mathrm{mult}(\sigma) = 1$.
        \item[(2)] If $\sigma=\mathrm{Cone}(u_1,u_2)$ is two-dimensional, where $u_i = p_i e_1 + q_i e_2\in N$ with $p_i>0(i=1,2)$ and $0< \frac{q_1}{p_1}< \frac{q_2}{p_2}$, then
        \begin{align*}
            \mathrm{mult}(\sigma)=p_1q_2-p_2q_1.
        \end{align*}
    \end{itemize}
    \begin{proof}
        (1) Since $u$ is primitive, $N_\sigma=(\mathbb Ru)\cap N=\mathbb Zu$, hence $\mathrm{mult}(\sigma)=1$.

        (2) Since $\sigma$ is two-dimensional, $u_1$ and $u_2$ span $\bR^2$. Hence $N_\sigma=N$ and $e_1,e_2$ form a basis of $N_\sigma$. Let $\det(u_1,u_2)$ be the determinant of $u_1,u_2$ with respect to the basis $e_1,e_2$. By \cite[Proposition 11.4.8(c)]{CoxLittleSchenck11Toricvarieties}, 
        \begin{align*}
            \mathrm{mult}(\sigma)=[N:\mathbb Zu_1+\mathbb Zu_2]=|\det(u_1,u_2)|.
        \end{align*} 
        By assumption, $\det(u_1,u_2)>0$, and hence $\mathrm{mult}(\sigma)=p_1q_2-p_2q_1$.
    \end{proof}
\end{lemma}

Let $\Sigma$ be a two-dimensional fan and let $\Sigma(k)$ denote the set of $k$-dimensional cones of $\Sigma$. 
It determines a toric surface $X_\Sigma$. 
For $\tau \in \Sigma(k)$, denote by $V(\tau)$ the $(2-k)$-dimensional torus-invariant closed subvariety of $X_\Sigma$, and write $D_\rho := V(\rho)$ for $\rho \in \Sigma(1)$.

Let $\tau \in \Sigma(1)$ be a \textit{wall}, i.e., $\tau = \sigma \cap \sigma'$ for some $\sigma, \sigma' \in \Sigma(2)$. Write
\begin{align*}
\tau=\rho_2, \quad 
\sigma = \mathrm{Cone}(u_{\rho_1}, u_{\rho_2}), \quad
\sigma' = \mathrm{Cone}(u_{\rho_2}, u_{\rho_3}),
\end{align*}
where $u_{\rho_i}=p_ie_1+q_ie_2$ is the ray generator of $\rho_{i}$ for $i=1,2,3$.

The following proposition gives explicit formulas for the intersection numbers $D_\rho \cdot V(\tau)$.

\begin{proposition}\label{explicit-intersection}
Let $\tau=\sigma\cap\sigma'$ be a wall of $\Sigma$ as above, and let $\rho\in \Sigma(1)$. Suppose that $p_i>0$ for $i=1,2,3$ and $0<\frac{q_1}{p_1}<\frac{q_2}{p_2}<\frac{q_3}{p_3}$.
Then
\begin{itemize}
    \item[(1)] $D_{\rho}\cdot V(\tau)=0$ for all $\rho\notin\{\rho_1,\rho_2,\rho_3\}$.
    \item[(2)] $D_{\rho_1}\cdot V(\tau)=\dfrac{1}{p_1q_2-p_2q_1}$ and $D_{\rho_3}\cdot V(\tau)=\dfrac{1}{p_2q_3-p_3q_2}$.
    \item[(3)] $D_{\rho_2}\cdot V(\tau)=-\dfrac{p_1q_3-p_3q_1}{(p_1q_2-p_2q_1)(p_2q_3-p_3q_2)}$.
\end{itemize}
\begin{proof}
    By assumption, $\rho_1,\rho_2,\rho_3$ lie in the first quadrant and are ordered by slope. There exist three nonzero constants $a_1,a_2,a_3$ such that
    \begin{align*}
        a_1u_{\rho_1}+a_2u_{\rho_2}+a_3u_{\rho_3}=0.
    \end{align*}
    This relation is unique up to multiplication by a positive constant since $u_{\rho_1},u_{\rho_2}$ are linearly independent. Moreover,
    \begin{align}\label{identities}
        \frac{a_1}{a_2}=-\frac{p_2q_3-p_3q_2}{p_1q_3-p_3q_1}, \quad
        \frac{a_3}{a_2}=-\frac{p_1q_2-p_2q_1}{p_1q_3-p_3q_1}.
    \end{align}
    
    The result follows by substituting Lemma~\ref{multiplicity-lemma} and the identities in \eqref{identities} into \cite[Proposition~6.4.4]{CoxLittleSchenck11Toricvarieties}.
\end{proof}
\end{proposition}

\subsection{Construction of a series of toric surfaces}\label{toric-construction}

Consider the fan $\Sigma_0$ whose maximal cone is $\sigma=\mathrm{Cone}(e_1,e_2)$ and let $X=X_{\Sigma_0}\cong\mathbb A^2$ be the associated smooth toric variety. The torus-fixed point $x \in X$ corresponds to the maximal cone $\sigma$, while the torus-invariant divisors $B_1$ and $B_2$ correspond to the rays $\mathbb{R}_{\geq 0}e_1$ and $\mathbb{R}_{\geq 0}e_2$, respectively.

Consider a subdivision of $\sigma$ by adding rays $\bR_{\geq 0} u_k$, $1\leq k\leq n$, where
$u_k=p_ke_1+q_ke_2 \in N$ are vectors satisfying
\begin{itemize}
    \item $(p_k,q_k)\in \bZ^2_{\geq 1}$;
    \item $\mathrm{gcd}(p_k,q_k)=1$, i.e., the vector $u_k$ is primitive for $1\leq k\leq n$;
    \item $0<\frac{q_1}{p_1}<\frac{q_2}{p_2}<\cdots<\frac{q_n}{p_n}$.
\end{itemize}

Throughout this paper, we assume that every $(2n)$-tuple $(p_1,q_1,...,p_n,q_n)$ satisfies above three conditions.

Let $\Sigma$ be a fan whose maximal cones are $\sigma_k:=\mathrm{Cone}(u_{k-1}, u_{k})$, $1\leq k\leq n+1$. Here we set $u_0=e_1$ and $u_{n+1}=e_2$, i.e., $(p_0,q_0)=(1,0)$ and $(p_{n+1},q_{n+1})=(0,1)$. This subdivision of $\sigma$ defines a toric morphism $\pi\colon W=X_\Sigma\to X$. The following diagram illustrates this subdivision:

\begin{center}
\begin{tikzpicture}[scale=0.9, >=Stealth]

  \draw[->] (-1.5,0) -- (1.5,0) node[right]{$x$};
  \draw[->] (0,-1.5) -- (0,1.5) node[above]{$y$};

  \node[below] at (0,-1.7) {$\Sigma$};
  \filldraw (0,0) circle (1pt);
  \draw[->, thick, blue] (0,0) -- (1,0) node[below]{$u_0$};
  \draw[->, thick, blue] (0,0) -- (0,1) node[left]{$u_{n+1}$};
  \draw[->, thick, blue] (0,0) -- (2,0.25) node[right]{$u_{1}$};
  \draw[->, thick, blue] (0,0) -- (1.75,0.75) node[right]{$u_{2}$};
  \draw[->, thick, blue] (0,0) -- (0.25,2) node[right]{$u_{n}$};
  \draw[->, thick, blue] (0,0) -- (0.75,1.75) node[right]{$u_{n-1}$};
  \node[gray] at (0.75,0.75) {$\cdots$};

\end{tikzpicture}
\hspace{2cm}
\begin{tikzpicture}[scale=0.9, >=Stealth]
  \draw[->] (-1.5,0) -- (1.5,0) node[right]{$x$};
  \draw[->] (0,-1.5) -- (0,1.5) node[above]{$y$};

  \node[below] at (0,-1.7) {$\Sigma_0$};
  
  \draw[->, thick, blue] (0,0) -- (1,0) node[below]{$u_0$};
  \draw[->, thick, blue] (0,0) -- (0,1) node[left]{$u_{n+1}$};

\end{tikzpicture}
\end{center}

For the toric surface $W$, we denote by $C_k$ the torus-invariant curve corresponding to the ray $\mathbb{R}_{\geq 0}u_k$ for $0 \leq k \leq n+1$. In particular, $C_0$ and $C_{n+1}$ are the strict transforms of $B_1$ and $B_2$, respectively, and hence are not proper.

We then determine intersection numbers of torus-invariant curves on $W$.

\begin{lemma}\label{intersection-number}
    \begin{itemize}
        \item[(1)] If $|i-j|\geq 2$, then $C_i\cdot C_j=0$;
        
        \item[(2)] For $1 \leq k \leq n+1$,
        \begin{align*}
            C_{k-1}\cdot C_k=\frac{1}{p_{k-1}q_k-p_kq_{k-1}}.
        \end{align*}
        In particular, since $(p_0,q_0)=(1,0)$ and $(p_{n+1}, q_{n+1})=(0,1)$,
        \begin{align*}
            C_0\cdot C_1=\frac{1}{q_1}, \quad C_n\cdot C_{n+1}=\frac{1}{p_n}.
        \end{align*}
        
        \item[(3)] For $2\leq k\leq n-1$,
        \begin{align*}
            C_k^2=-\frac{p_{k-1}q_{k+1}-p_{k+1}q_{k-1}}{(p_{k-1}q_k-p_kq_{k-1})(p_kq_{k+1}-p_{k+1}q_k)}.
        \end{align*}
        For $k=1,n$, we have
        \begin{align*}
            C_1^2=-\frac{q_2}{q_1(p_1q_2-p_2q_1)}, \quad C_n^2=-\frac{p_{n-1}}{p_n(p_{n-1}q_n-p_nq_{n-1})}.
        \end{align*}
    \end{itemize}
    \begin{proof}
        Since $C_k=V(\mathrm{Cone}(u_k))=D_{\mathrm{Cone}(u_k)}$ for $0\leq k\leq n+1$, we may write
        \begin{align*}
            C_i\cdot C_j=D_{\mathrm{Cone}(u_i)}\cdot V(\mathrm{Cone}(u_j)).
        \end{align*}
        Applying Proposition~\ref{explicit-intersection} to the adjacent cones $\sigma_{k-1}, \sigma_{k}(1\leq k\leq n+1)$ gives the stated intersection numbers.
    \end{proof}
\end{lemma}

\section{Proof of the main results}

We begin by proving the following proposition, from which we deduce Theorem~\ref{Main-theorem-geometric}.

\begin{proposition}\label{pre-main-theorem}
    Let $(X, B)$ be a $\bQ$-factorial projective log canonical surface, with $K_X+B$ big and nef. Let $\cC_B$ be the coefficient set of $B$. Suppose that there exist two components $B_1,B_2$ of $B^{=1}$ and a smooth point $x$ such that
    \begin{enumerate}
        \item[(1)] $B_1$ and $B_2$ intersect transversally at $x$;
        \item[(2)] $(K_X+B)\cdot B_1=0, (K_X+B)\cdot B_2>0$.
    \end{enumerate}
    Then $\mathrm{vol}(X, K_X+B)\in \mathrm{Vol}^{(\infty)}(2, \cC_B)$.
\end{proposition}

\subsection{Proof of Proposition \ref{pre-main-theorem}}

Throughout this section, we fix $(X,B), x$ as in Proposition~\ref{pre-main-theorem}. By the choice of $x$, $B_1$ and $B_2$ are determined uniquely.

The argument in this paragraph is inspired by \cite[5.12]{AM04}. Since $X$ is smooth at $x$ and $B_1,B_2$ intersect transversally at $x$,
after shrinking we may assume that there exists a Zariski open neighbourhood
$U$ of $x$ with regular parameters $u,v$
such that $B_1\cap U=(u=0)$ and $B_2\cap U=(v=0)$.
Then $(u,v)\colon (U\ni x)\to (\bA^2,0)$ is \'{e}tale at $x$.
Thus $(U,B_1\cap U+B_2\cap U)$ can be identified with
the affine toric surface $\bA^2$ associated to a fan
$\Sigma_0$ whose maximal cone is $\mathrm{Cone}(e_1,e_2)$.

Applying the toric construction in
Section~\ref{toric-construction} to $U$
yields a birational morphism
$\pi_U\colon W_U\to U$, which is an isomorphism over $U\setminus\{x\}$. 
Since the construction is local over $x$, we obtain a projective birational morphism 
$\pi\colon W \to X$ that is an isomorphism over $X\setminus\{x\}$.  We introduce the following divisors on $W$.
\begin{itemize}
    \item $B':=B-B_1-B_2$;
    \item $K_W+B_W:=\pi^*(K_X+B)=K_W+\pi_*^{-1}B'+C_0+C_{n+1}+\sum_{k=1}^n C_k$;
    \item $K_W+\tilde{B}_W:=K_W+\pi_*^{-1}B'+C_0+C_{n+1}$.
\end{itemize}

By Lemma~\ref{intersection-number},
\begin{align*}
    (K_W+\tilde{B}_W)\cdot C_0&=(K_W+B_W-\sum_{k=1}^n C_k)\cdot C_0=-C_0\cdot C_1<0.
\end{align*}
In particular, $C_0$ is $(K_W+\tilde{B}_W)$-negative. To study the contraction of $C_0$, we first compute its self-intersection number.

\begin{lemma}\label{C_0^2}
    $C_0^2=B_1^2-\dfrac{p_1}{q_1}$. 
    \begin{proof}
        By \cite[Lemma 5.13(2)]{AM04} (or standard toric geometry), since $u_k=p_ku_0+q_ku_{n+1}$ for $1\leq k\leq n$, we have
        \begin{align*}
            \pi^*(K_X+B_2)=K_W+C_0+(1-p_1)C_1+\cdots+(1-p_n)C_n,
        \end{align*}
        and
        \begin{align*}
            \pi^*(K_X+B_1+B_2)=K_W+C_0+C_{n+1}+C_1+\cdots+C_n.
        \end{align*}
        
        Therefore,
        \begin{align*}
            \pi^*B_1&=\pi^*(K_X+B_1+B_2)-\pi^*(K_X+B_2)\\
            &=C_0+p_1C_1+\cdots+p_nC_n.
        \end{align*}

        For $1\leq k\leq n$, $C_k$ is exceptional over $X$, it follows that $C_k\cdot \pi^*B_1=0$. By Lemma \ref{intersection-number}(1)(2), we obtain
        \begin{align*}
            B_1^2 = (\pi^*B_1 )^2=C_0\cdot \pi^*B_1= C_0^2 + p_1\, C_0 \cdot C_1= C_0^2 + \frac{p_1}{q_1}.
        \end{align*}
        Consequently, $C_0^2=B_1^2-\dfrac{p_1}{q_1}$. 
    \end{proof}
\end{lemma}

Since $K_X+B$ is big and nef and $(K_X+B)\cdot B_1=0$, the Hodge index theorem implies that $B_1^2<0$. By Lemma \ref{C_0^2},  $C_0^2<B_1^2<0$. Hence $\bR_{\geq 0}[C_0]$ is a $(K_W+\tilde{B}_W)$-negative extremal ray in $\overline{\mathrm{NE}}(W)$. By the contraction theorem (for example, \cite[Theorem 3.2]{Fujino12MMP}), there is a projective birational morphism $g\colon W\to Z$ contracting $C_0$ to a point. 

Define
\begin{itemize}
    \item $a_0:=\dfrac{(K_W+\tilde{B}_W)\cdot C_0}{C_0^2}=\dfrac{-C_0\cdot C_1}{C_0^2}>0$,
    \item $K_W+\bar{B}_W:=K_W+\tilde{B}_W-a_0C_0=K_W+B_W-\sum_{k=1}^{n}C_k-a_0C_0$.
\end{itemize}
By definition, $(K_W+\bar{B}_W)\cdot C_0=0$.

\begin{lemma}\label{a_0}
    $a_0=\dfrac{1}{p_1-q_1B_1^2}$. In particular, $a_0<\dfrac{1}{p_1}$.
    \begin{proof}
        This follows immediately from the definition of $a_0$ together with Lemma \ref{intersection-number}(2) and Lemma \ref{C_0^2}.
    \end{proof}
\end{lemma}

We now establish two lemmas which together provide a nef criterion for $K_W+\bar{B}_W$. By assumption, let $l=l(X, B, x)$ be the smallest positive integer such that
\begin{itemize}
    \item $K_X+\tilde{B}=K_X+B-\frac{1}{l}B_1-\frac{1}{l}B_2$ is big;
    \item $(K_X+B)\cdot B_2\geq \frac{1}{l}$.
\end{itemize}

\begin{lemma}\label{nef1}
    Suppose that $(p_1,q_1,\cdots,p_n,q_n)$ satisfies the following system of inequalities:
    \begin{align*}
        p_k\geq l,~\text{for all } 1\leq k\leq n.
    \end{align*}
    Then 
    \begin{itemize}
        \item[(1)] $K_W+\bar{B}_W$ is big. In particular, $K_W+\tilde{B}_W$ is also big.
        \item[(2)] $(K_W+\bar{B}_W) \cdot C\geq 0$ for any irreducible curve $C \not\subseteq\mathrm{Supp}(\sum_{k=1}^{n} C_k)$.
    \end{itemize}
    \begin{proof}
        Let $E$ be a curve over $X$ associated to a ray $\bR_{\geq 0}u$, where $u=pe_1+qe_2$ is primitive with $p,q\in \bZ_{\geq 0}$. By the formula in \cite[Lemma 5.13(2)]{AM04}, we have
        \begin{align*}
            a(E, X, \tilde{B})=pa(B_1, X, \tilde{B})+qa(B_2, X, \tilde{B})=\frac{p+q}{l}.
        \end{align*}
        Since $p_k\geq l$ for all $1\leq k\leq n$, we have
        \begin{align*}
            a(C_k, X, \tilde{B})=\frac{p_k+q_k}{l}\geq \frac{l+1}{l}>1=a(C_k, W, \bar{B}_W), 1\leq k\leq n.
        \end{align*}
        Similarly, we have
        \begin{align*}
            a(C_{n+1}, X, \tilde{B})&=\frac{p_{n+1}+q_{n+1}}{l}=\frac{1}{l}>0=a(C_{n+1}, W, \bar{B}_W),\\
            a(C_0, X, \tilde{B})&=\frac{p_0+q_0}{l}=\frac{1}{l}\geq \frac{1}{p_1}> a_0=a(C_0, W, \bar{B}_W).
        \end{align*}
        Thus $K_W+\bar{B}_W-\pi^*(K_X+\tilde{B})$ is effective and is supported on $\mathrm{Supp}(\sum_{k=0}^{n+1} C_k)$. Consequently, $K_W+\bar{B}_W$ is big and $(K_W+\bar{B}_W) \cdot C\geq 0$ for any irreducible curve $C \not\subseteq\mathrm{Supp}(\sum_{k=0}^{n+1} C_k)$. By the definition of $K_W+\bar{B}_W$, $(K_W+\bar{B}_W)\cdot C_0=0$. The inequality $(K_W+\bar{B}_W)\cdot C_{n+1}\geq 0$ holds since
        \begin{align*}
            (K_W+\bar{B}_W)\cdot C_{n+1}=(K_X+B)\cdot B_2-C_{n}\cdot C_{n+1}\geq \frac{1}{l}-\frac{1}{p_n}\geq 0.
        \end{align*}
        We complete the proof.
    \end{proof}
\end{lemma}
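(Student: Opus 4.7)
The plan is to express $K_W+\bar{B}_W$ as the pullback $\pi^*(K_X+\tilde{B})$ plus an effective divisor supported on the torus-invariant curves $C_0,C_1,\dots,C_{n+1}$. Because $K_X+\tilde{B}$ is big by the choice of $l$, this immediately yields the bigness of $K_W+\bar{B}_W$, and the support condition reduces the claimed nef inequality on curves outside $\mathrm{Supp}(\sum_{k=1}^{n}C_k)$ to a pair of direct intersection computations for the two boundary cases $C_0$ and $C_{n+1}$.

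The key computational input is the log discrepancy of each torus-invariant divisor $C_k$ with respect to $(X,\tilde{B})$. I would invoke the toric log discrepancy formula of Ambro--Matsuki: for a torus-invariant divisor $E$ corresponding to a primitive vector $pe_1+qe_2$, one has $a(E,X,\tilde{B})=p\cdot a(B_1,X,\tilde{B})+q\cdot a(B_2,X,\tilde{B})=(p+q)/l$. The log discrepancies $a(C_k,W,\bar{B}_W)$ are read off directly from the definition of $\bar{B}_W$: they equal $1$ for $1\leq k\leq n$, equal $0$ for $C_{n+1}$, and equal $a_0$ for $C_0$. The hypothesis $p_k\geq l$ together with $q_k\geq 1$ gives $(p_k+q_k)/l\geq(l+1)/l>1$ for the interior rays, the inequality $1/l>0$ handles $C_{n+1}$, and Lemma~\ref{a_0} combined with $p_1\geq l$ gives $1/l\geq 1/p_1>a_0$ for $C_0$. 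Consequently $F:=K_W+\bar{B}_W-\pi^*(K_X+\tilde{B})$ has non-negative coefficient on every torus-invariant divisor and is supported on $\sum_{k=0}^{n+1}C_k$.

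Bigness of $K_W+\bar{B}_W$ then follows from $\pi^*(K_X+\tilde{B})$ big plus $F$ effective. For the intersection bound, any irreducible curve $C$ not contained in $\mathrm{Supp}(\sum_{k=0}^{n+1}C_k)$ satisfies $F\cdot C\geq 0$ since $C\not\subseteq\mathrm{Supp}(F)$, together with $\pi^*(K_X+\tilde{B})\cdot C\geq 0$ via the projection formula applied to $\pi_*C$. The two remaining curves are handled by a direct computation: $(K_W+\bar{B}_W)\cdot C_0=0$ by the defining property of $a_0$ (it is precisely the value that makes $K_W+\bar{B}_W$ trivial on the $g$-exceptional curve $C_0$), and $(K_W+\bar{B}_W)\cdot C_{n+1}$ reduces, via the definition of $\bar{B}_W$ and the projection formula, to $(K_X+B)\cdot B_2-C_n\cdot C_{n+1}\geq 1/l-1/p_n\geq 0$ using Lemma~\ref{intersection2} and $p_n\geq l$.

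The principal technical hurdle I anticipate is verifying the coefficient comparison cleanly at $C_0$: the bound $a_0<1/p_1$ from Lemma~\ref{a_0} is exactly what calibrates the combinatorial data $(p_k,q_k)$ against the geometric condition $B_1^2<0$, and this is where the original assumption that $B_1$ is contractible enters decisively. The verifications at $C_{n+1}$ and on the interior rays are then routine by contrast, once the defining positivity assumption $(K_X+B)\cdot B_2\geq 1/l$ is used.
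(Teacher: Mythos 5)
Your proposal is correct and follows essentially the same route as the paper: the same decomposition $K_W+\bar{B}_W=\pi^*(K_X+\tilde{B})+F$ with $F$ effective via the toric log discrepancy formula of \cite[Lemma 5.13(2)]{AM04}, bigness from $K_X+\tilde{B}$ big, and the same direct checks $(K_W+\bar{B}_W)\cdot C_0=0$ and $(K_W+\bar{B}_W)\cdot C_{n+1}\geq \frac{1}{l}-\frac{1}{p_n}\geq 0$. (Only a minor slip: the cited formula is due to Alexeev--Mori, not Ambro--Matsuki.)
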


Lemma~\ref{nef1} deals with curves not contained in $\mathrm{Supp}(\sum_{k=1}^n C_k)$. We now treat curves supported on this locus.

\begin{lemma}\label{nef2}
    Suppose that $(p_1,q_1,\cdots,p_n,q_n)$ satisfies 
    \begin{align*}
        0<\frac{q_1}{p_1}<\frac{q_2}{p_1}<\cdots<\frac{q_n}{p_n},
    \end{align*}
    and the following system of inequalities:
    \begin{itemize}
        \item For $n=1$, $1-a_0 p_1\geq 0~($holds for any $p_1\geq 1)$.
        \item For $n=2$, 
            \begin{align*}
                p_1-p_2\geq 0,\quad q_2-q_1-a_0(p_1q_2&-p_2q_1)\geq 0.
            \end{align*}
        \item For $n\geq 3$,
            \begin{align*}
                p_{n-1}-p_n\geq 0,\quad q_2-q_1-a_0(p_1q_2&-p_2q_1)\geq 0,
            \end{align*}
            and for $2\leq k\leq n-1$,
            \begin{align*}
                (p_{k-1}q_{k+1}-p_{k+1}q_{k-1})-(p_{k-1}q_k-p_kq_{k-1})-(p_kq_{k+1}-p_{k+1}q_k) \geq 0.
            \end{align*}
    \end{itemize}
    Then $(K_W+\bar{B}_W) \cdot C\geq 0$ for any irreducible curve $C$ supported on $\mathrm{Supp}(\sum_{k=1}^{n} C_k)$.
    \begin{proof}
         This Lemma follows from Lemma \ref{intersection-number}. Note $K_W+B_W\equiv_X 0$ and $K_W+\bar{B}_W\equiv_X -a_0C_0-\sum_{k=1}^n C_k$, where we denote $\equiv_X$ for relative numerical equivalence over $X$. For $n=1$, 
         \begin{align*}
             (K_W+\bar{B}_W)\cdot C_1=(-a_0C_0-C_1)\cdot C_1=\frac{1-a_0p_1}{p_1q_1}\geq 0.
         \end{align*}
         For $n=2$,
         \begin{align*}
             (K_W+\bar{B}_W)\cdot C_1&=(-a_0C_0-C_1-C_2)\cdot C_1\\
             &=\frac{q_2-q_1-a_0(p_1q_2-p_2q_1)}{p_1(p_1q_2-p_2q_1)}\geq 0.
         \end{align*}
         and
         \begin{align*}
             (K_W+\bar{B}_W)\cdot C_2=(-C_1-C_2)\cdot C_2=\frac{p_1-p_2}{p_2(p_1q_2-p_2q_1)}\geq 0.
         \end{align*}
         Finally, we assume that $n\geq 3$. For $2\leq k\leq n-1$,
         \begin{align*}
             &(K_W+\bar{B}_W)\cdot C_k\\=&(-C_{k-1}-C_k-C_{k+1})\cdot C_k\\
             =&\frac{(p_{k-1}q_{k+1}-p_{k+1}q_{k-1})
             -(p_{k-1}q_{k}-p_{k}q_{k-1})-(p_{k}q_{k+1}-p_{k+1}q_k)}{(p_{k-1}q_{k}-p_{k}q_{k-1})(p_{k}q_{k+1}-p_{k+1}q_{k})}\geq 0.
         \end{align*}
         For $k=1,n$, we can use the similar argument as in the proof of $n=2$ case.
    \end{proof}
\end{lemma}

Define 
\begin{align*}
    K_Z+B_Z:=g_*(K_W+\tilde{B}_W)=g_*(K_W+\bar{B}_W).
\end{align*}
Under the assumptions of Lemmas \ref{nef1} and \ref{nef2}, $K_W+\bar{B}_W$ and $K_Z+B_Z$ are big and nef. Since $\tilde B_W$ has coefficients in $\cC_B$, the same holds for $B_Z$, and hence $(Z, B_Z)\in \cS(2,\cC_B)$. Since $g$ is $(K_W+\bar{B}_W)$-trivial, 
\begin{align*}
    K_W+\bar{B}_W=g^{*}(K_Z+B_Z).
\end{align*}
Therefore
\begin{align*}
    \mathrm{vol}(Z, K_Z+B_Z)=\mathrm{vol}(W, K_W+\bar{B}_W)=(K_W+\bar{B}_W)^2 \in \mathrm{Vol}(2,\cC_B).
\end{align*}

We now collect the above conditions into a parameter set.

\begin{definition}
    The set $\cS\cN\cP_n(X, B, x)$ consists of all $(2n)$-tuples $(p_1,q_1,
    \cdots,\\p_n,q_n)\in \bZ_{\geq 1}^{2n}$ satisfying the following conditions.
    \begin{itemize}
        \item $p_1>p_2>\cdots>p_n$ (monotonic condition);
        \item $0<\frac{q_1}{p_1}<\frac{q_2}{p_2}<\cdots<\frac{q_n}{p_n}$ (slope condition);
        \item inequalities in Lemmas \ref{nef1} and \ref{nef2} (nef condition);
        \item vector $u_k=p_ke_1+q_ke_2$ is primitive for $1\leq k\leq n$ (primitive condition).
    \end{itemize}
\end{definition}

Each element of $\cS\cN\cP_n(X, B, x)$ determines surface pairs $(W,\bar{B}_W)$ and $(Z,B_Z)$.

\begin{remark}
    The nef condition depends on the choice of the surface pair $(X, B)$ and the smooth point $x$, as the constants $l$ and $a_0$ in Lemmas~\ref{nef1} and \ref{nef2} depend on it. For simplicity, we write $\cS\cN\cP_n$ for $\cS\cN\cP_n(X, B, x)$ when $(X, B)$ and $x$ are clear from the context.
\end{remark}

We list some useful properties of $\cS\cN\cP_n$.

\begin{lemma}\label{keyLemma}
    \begin{itemize}
        \item[(1)] Suppose that $n\geq 2$. If $(p_1,q_1,\cdots,p_n,q_n)\in \cS\cN\cP_n$, then $(p_1,q_1,\cdots,p_{n-1},q_{n-1})\in \cS\cN\cP_{n-1}$.
        \item[(2)] Suppose that $n\geq 1$. If $(p_1,q_1,\cdots,p_n,q_n)\in \cS\cN\cP_n$, then for any integer $m\geq q_n$ with $\mathrm{gcd}(p_n,m)=1$, $(p_1,q_1,\cdots,p_n,m)\in \cS\cN\cP_n$.
        \item[(3)] Suppose that $n\geq 2$. If $(p_1,q_1,\cdots,p_{n-1}, q_{n-1})\in \cS\cN\cP_{n-1}$ and $p_{n-1}\geq l+1$, then there exist integers $p_n, q_n$ such that $(p_1,q_1,\cdots,p_n,q_n)\in \cS\cN\cP_n$. In particular, for any integer $m\geq q_n$ with $\mathrm{gcd}(p_n,m)=1$, $(p_1,q_1,\cdots,
        p_n,m)\in \cS\cN\cP_n$.
        \item[(4)] Suppose that $n\geq 1$. The set $\cS\cN\cP_n$ is nonempty.
    \end{itemize}
    \begin{proof}
        (1) and (2) are immediate from the definition of $\cS\cN\cP_n$.
        
        (3) We first consider the case $n=2$. Let $(p_1,q_1)\in \cS\cN\cP_1$ and $p_1\geq l+1$. Choose an integer $p_2$ such that $p_1>p_2\geq l$. Since $(p_1,q_1)$ is fixed, the constant $a_0$ is also fixed by Lemma \ref{a_0}. Choose $q_2$ satisfying $\mathrm{gcd}(p_2,q_2)=1$ and 
        \begin{align*}
            q_2\geq \max\left\{ \lceil \frac{p_2q_1}{p_1} \rceil,\lceil \frac{q_1-a_0p_2q_1}{1-a_0p_1} \rceil \right\}.
        \end{align*}
        The first inequality ensures slope condition and the second implies
        \begin{align*}
            q_2-q_1-a_0(p_1q_2&-p_2q_1)\geq 0.
        \end{align*}
        The remaining conditions are straightforward to verify, hence $(p_1,q_1,p_2,q_2)\in \cS\cN\cP_2$.
        
        Now assume $n\geq 3$ and let $(p_1,q_1,\cdots,p_{n-1},q_{n-1})\in \cS\cN\cP_{n-1}$ with $p_{n-1}\geq l+1$. Choose an integer $p_n$ such that $p_{n-1}>p_n\geq l$, and then choose $q_n$ satisfying $\mathrm{gcd}(p_n,q_n)=1$ and 
        \begin{align*}
            q_n\geq \max\left\{ \lceil \frac{p_nq_{n-1}}{p_{n-1}} \rceil, \lceil \frac{(p_{n-2}q_{n-1}-p_{n-1}q_{n-2})+p_n(q_{n-2}-q_{n-1})}{p_{n-2}-p_{n-1}} \rceil \right\}.
        \end{align*}
        The first inequality ensures slope condition and the second implies
        \begin{align*}
            (p_{n-2}q_{n}-p_{n}q_{n-2})-(p_{n-2}q_{n-1}-p_{n-1}q_{n-2})-(p_{n-1}q_{n}-p_{n}q_{n-1})\geq 0.
        \end{align*}
        The remaining conditions are straightforward to verify, hence $(p_1,q_1,\cdots,p_n,q_n)\in \cS\cN\cP_n$.
        
        (4) Choose $p_1\geq l+n-1$ and set $p_k=p_1-k+1$ for $2\leq k\leq n$. Let $q_1=1$, so that $(p_1,q_1)\in \cS\cN\cP_1$. Apply (3) inductively produces an element $(p_1,q_1,\cdots,p_n,q_n)\in \cS\cN\cP_n$.
    \end{proof}
\end{lemma}

We now express $\mathrm{vol}(Z,K_Z+B_Z)$ in terms of the parameters $(p_1,q_1,\dots,p_n,q_n)$. To simplify the notation, we introduce an auxiliary function $f_n=f_n^{(X, B, x)}$ on $\bZ^{2n}_{\geq 1}$. When $(X, B)$ and $x$ are clear, we simply write $f_n$. 

For $n\geq 3$, define
    \begin{align*}
        &f_n(p_1,q_1,\cdots,p_n,q_n)\\
        =&\left( \frac{q_2}{q_1}-1 \right) \cdot \frac{1}{p_1q_2-p_2q_1}+\left( \frac{p_{n-1}}{p_n}-1 \right) \cdot\frac{1}{p_{n-1}q_n-p_nq_{n-1}}\\
        &+\sum_{k=2}^{n-1} \frac{(p_{k-1}q_{k+1}-p_{k+1}q_{k-1})-(p_{k-1}q_k-p_kq_{k-1})-(p_{k}q_{k+1}-p_{k+1}q_k)}{(p_{k-1}q_k-p_kq_{k-1})(p_kq_{k+1}-p_{k+1}q_k)}\\
        &-\frac{1}{p_1-q_1B_1^2}\cdot \frac{1}{q_1}.
    \end{align*}
    
For $n=1,2$, define
    \begin{align*}
        f_2(p_1,q_1,p_2,q_2)=&\left( \frac{q_2}{q_1}-1 \right) \cdot \frac{1}{p_1q_2-p_2q_1}+\left( \frac{p_1}{p_2}-1 \right) \cdot \frac{1}{p_1q_2-p_2q_1}\\
        &-\frac{1}{p_1-q_1B_1^2}\cdot \frac{1}{q_1};\\
        f_1(p_1,q_1)=&\frac{1}{p_1q_1}-\frac{1}{p_1-q_1B_1^2}\cdot \frac{1}{q_1}.
    \end{align*}
    
We also set $f_0=0$. The following lemma shows that the function $f_n$ measures the drop of volume under the construction.

\begin{lemma}\label{relation-between-volume-F}
    Let $(p_1,q_1,\ldots,p_n,q_n)\in \cS\cN\cP_n$ and let $(W,\bar{B}_W)$ and $(Z,B_Z)$ be the associated pairs. Then
    \begin{align*}
        \mathrm{vol}(Z, K_Z+B_Z)=\mathrm{vol}(X, K_X+B)-f_n(p_1,q_1,\cdots,p_n,q_n).
    \end{align*}
    \begin{proof}
        \begin{align*}
            \mathrm{vol}(Z, K_Z+B_Z)&=(K_W+\bar{B}_W)^2\\
            &=(K_W+B_W)^2+(\sum_{k=1}^{n} C_k)^2+a_0C_0\cdot C_1\\
            &=\mathrm{vol}(X, K_X+B)+(\sum_{k=1}^{n} C_k)^2+a_0C_0\cdot C_1.
        \end{align*}
        By Lemma \ref{intersection-number},
        \begin{align*}
            (\sum_{k=1}^{n} C_k)^2+a_0C_0\cdot C_1=-f_n(p_1,q_1,\dots,p_n,q_n),
        \end{align*}
        the result follows.
    \end{proof}
\end{lemma}

Define
\begin{align*}
    \cF:=\{f_n(p_1,q_1,\cdots,p_n,q_n)|(p_1,\cdots,q_n)\in \cS\cN\cP_n, n\geq 1\}\cup \{0\}.
\end{align*}

The following lemma describes the iterated accumulation structure of $\cF$.

\begin{lemma}\label{DecreasingClaim}
    Let $(p_1,q_1,...,p_n,q_n)\in \cS\cN\cP_n$. Suppose that $s\geq 0$ and 
\begin{align*}
    f_n(p_1,q_1,\cdots,p_n,m)\in \cF^{(s)}
\end{align*}
for all integers $m\geq q_n$ with $\gcd(p_n,m)=1$. Then
\begin{align*}
    f_{n-1}(p_1,q_1,\cdots,p_{n-1},q_{n-1})\in \cF^{(s+1)}.
\end{align*}
    \begin{proof}
        By Lemma \ref{keyLemma}(2), 
        \begin{align*}
            (p_1,q_1,\cdots,p_n,m)\in \cS\cN\cP_n
        \end{align*}
        for all integers $m\geq q_n$ with $\gcd(p_n,m)=1$. Hence
        \begin{align*}
            \{f_n(p_1,q_1,\cdots,p_n,m)\}_{m\geq q_n,\gcd(p_n,m)=1}\subseteq \cF.
        \end{align*}
        It suffices to prove the following claim.
        \begin{claim}\label{claim-F}
            For fixed $(p_1,q_1,\cdots,p_n,q_n) \in \cS\cN\cP_n$, the sequence
            \begin{align*}
                \{f_n(p_1,q_1,\cdots,p_n,m)\}_{m\geq q_n,\gcd(p_n,m)=1}
            \end{align*}
            is strictly decreasing with
            \begin{align*}
                \lim_{m\to \infty} f_1(p_1,m)=f_0,
            \end{align*}
            and 
            \begin{align*}
                \lim_{m\to \infty} f_n(p_1,q_1,\cdots,p_n,m)=f_{n-1}(p_1,q_1,\cdots,p_{n-1},q_{n-1})
            \end{align*}
            for $n\geq 2$.
        \begin{proof}[Proof of the Claim \ref{claim-F}]
    For $n=1$, since
        \begin{align*}
            f_1(p_1,m)=\frac{1}{p_1m}-\frac{1}{p_1-mB_1^2}\cdot \frac{1}{m},
        \end{align*}
    the sequence $\{f_1(p_1,m)\}_{m\geq q_1, \gcd(p_1,m)=1}$ is strictly decreasing and converges to $0$.
    
    For $n=2$, 
    \begin{align*}
        f_2(p_1,q_1,p_2,m)=&\left( \frac{m}{q_1}+\frac{p_1}{p_2}-2\right) \cdot \frac{1}{p_1m-p_2q_1}-\frac{1}{p_1-q_1B_1^2}\cdot \frac{1}{q_1}\\
        =&\frac{1}{p_2q_1}\cdot \left( \frac{p_2}{p_1}+\frac{q_1}{p_1}\cdot \frac{(p_1-p_2)^2}{p_1m-p_2q_1} \right)-\frac{1}{p_1-q_1B_1^2}\cdot \frac{1}{q_1}\\
        =&f_1(p_1,q_1)+\frac{(p_1-p_2)^2}{p_1p_2}\cdot \frac{1}{p_1m-p_2q_1}.
    \end{align*}
    Since $p_1-p_2>0$, $(p_1-p_2)^2>0$ and the sequence $\{f_2(p_1,q_1,p_2,m)\}_{m\geq q_2, \gcd(p_2,m)=1}$ is strictly decreasing and converges to $f_1(p_1,q_1)$.

    For $n\geq 3$, 
        \begin{align*}
            f_n(p_1,q_1,\cdots,p_n,m)=&f_{n-1}(p_1,q_1,\cdots,p_{n-1},q_{n-1})+A_{(\vec{p},\vec{q})}(m),
        \end{align*}
        where $(\vec{p},\vec{q}):=(p_1,q_1,\dots, p_n,q_n)$ and
        \begin{align*}
            A_{(\vec{p},\vec{q})}(m):=&\frac{(p_{n-2}m-p_{n}q_{n-2})-(p_{n-2}q_{n-1}-p_{n-1}q_{n-2})-(p_{n-1}m-p_{n}q_{n-1})}{(p_{n-2}q_{n-1}-p_{n-1}q_{n-2})(p_{n-1}m-p_{n}q_{n-1})}\\
            &+\left( \frac{p_{n-1}}{p_n}-1 \right) \cdot \frac{1}{p_{n-1}m-p_nq_{n-1}}\\
            &-\left( \frac{p_{n-2}}{p_{n-1}}-1 \right) \cdot\frac{1}{p_{n-2}q_{n-1}-p_{n-1}q_{n-2}}.
        \end{align*}
        
        We compute
        \begin{align*}
            &\frac{(p_{n-2}m-p_{n}q_{n-2})-(p_{n-2}q_{n-1}-p_{n-1}q_{n-2})-(p_{n-1}m-p_{n}q_{n-1})}{(p_{n-2}q_{n-1}-p_{n-1}q_{n-2})(p_{n-1}m-p_{n}q_{n-1})}\\
            =&\frac{(p_{n-2}-p_{n-1})m-(p_{n-2}q_{n-1}-p_{n-1}q_{n-2})-p_n(q_{n-2}-q_{n-1})}{(p_{n-2}q_{n-1}-p_{n-1}q_{n-2})(p_{n-1}m-p_{n}q_{n-1})}\\
            =&\frac{1}{p_{n-2}q_{n-1}-p_{n-1}q_{n-2}}\cdot \left( \frac{p_{n-2}}{p_{n-1}}-1-\frac{(p_{n-1}-p_{n})(p_{n-2}q_{n-1}-p_{n-1}q_{n-2})}{p_{n-1}(p_{n-1}m-p_nq_{n-1})} \right)\\
            =&\left( \frac{p_{n-2}}{p_{n-1}}-1 \right) \cdot \frac{1}{p_{n-2}q_{n-1}-p_{n-1}q_{n-2}}+\left( \frac{p_n}{p_{n-1}}-1 \right) \cdot \frac{1}{p_{n-1}m-p_{n}q_{n-1}}.
        \end{align*}

        Therefore
        \begin{align*}
            A_{(\vec{p},\vec{q})}(m)=\left( \frac{p_{n-1}}{p_n}+\frac{p_n}{p_{n-1}}-2 \right)\cdot \frac{1}{p_{n-1}m-p_{n}q_{n-1}}.
        \end{align*}

        Since
        \begin{align*}
            \frac{p_{n-1}}{p_n}+\frac{p_n}{p_{n-1}}-2>0
        \end{align*}
        and
        \begin{align*}
            p_{n-1}m-p_nq_{n-1}\ge p_{n-1}q_n-p_nq_{n-1}>0,
        \end{align*}
        the sequence $\{A_{(\vec{p},\vec{q})}(m)\}_{m\ge q_n,\gcd(p_n,m)=1}$ is strictly decreasing with limit $0$. Hence 
        \begin{align*}
            \lim_{m\to \infty} f_n(p_1,q_1,\cdots,p_n,m)=f_{n-1}(p_1,q_1,\cdots,p_{n-1},q_{n-1}).
        \end{align*}
        This proves the claim and hence the lemma.
            \end{proof}
        \end{claim}
    \end{proof}
\end{lemma}

\begin{proposition}\label{fuzhu}
    The set $\cF$ has an infinite accumulation point.
    \begin{proof}
        It suffices to show that $0\in\cF^{(n)}$ for all $n\ge1$.
        
        Fix $n\geq 1$ and set $\vec{p}:=(p_1,p_2,\cdots,p_n)=(l+n-1,l+n-2,\cdots,l)$. 
        By Lemma \ref{keyLemma}(3), let $q_1(\vec{p})$ be the smallest integer such that $(l+n-1,q_1(\vec{p}))\in \cS\cN\cP_1$. We define $\cM_1$ for the set of all integers $m_1$ such that $m_1\geq q_1(\vec{p})$ and $\gcd(l+n-1,m_1)=1$.
        By Lemma \ref{keyLemma}(2), 
        \begin{align*}
            m_1\in \cM_1\implies(l+n-1,m_1)\in \cS\cN\cP_1.
        \end{align*}

        Assume now that $n\geq 2$. We will define $\cM_k$ for $2\leq k\leq n$ by induction. Suppose that $\cM_{k-1}$ has been defined. For each $(m_1,\dots,m_{k-1})\in \cM_{k-1}$, By Lemma \ref{keyLemma}(3), let $q_k(\vec{p};m_1,\dots,m_{k-1})$ be the smallest integer such that 
        \begin{align*}
            (l+n-1,m_1,\ldots,l+n-k,q_k(\vec{p};m_1,\dots,m_{k-1}))\in\cS\cN\cP_k.
        \end{align*}
        We define $\cM_{k}$ for the set of all $k$-tuples $(m_1,\dots,m_{k-1},m_k)$ such that
        \begin{itemize}
            \item $(m_1,\dots,m_{k-1})\in \cM_{k-1}$;
            \item $m_k\geq q_k(\vec{p};m_1,\dots,m_{k-1})$ and $\gcd(l+n-k,m_k)=1$.
        \end{itemize}
        By Lemma \ref{keyLemma}(2),
        \begin{align*}
            (m_1,\ldots,m_k)\in\cM_k \Longrightarrow (l+n-1,m_1,\ldots,l+n-k,m_k)\in\cS\cN\cP_k.
        \end{align*}
        Note that if $1\le j<k\le n$ and $(m_1,\ldots,m_k)\in\cM_k$, then $(m_1,\ldots,m_j)\in\cM_j$.
        
        By the definition of $\cM_n$ and $\cF$, for every $(m_1,\cdots,m_{n-1},m_n)\in \cM_n$, we have
        \begin{align*}
            f_n(l+n-1,m_1,\cdots,l,m_n)\in \cF.
        \end{align*}

        We now apply Lemma~\ref{DecreasingClaim} iteratively.
    \begin{itemize}
        \item Apply it to $(l+n-1,m_1,\cdots, l,m_n)$ for all $(m_1,\cdots, m_n)\in \cM_n$, we obtain
            \begin{align*}
                f_{n-1}(l+n-1,m_1,\cdots,l+1,m_{n-1})\in \cF^{(1)},
            \end{align*}
        for all $(m_1,\cdots,m_{n-2},m_{n-1})\in \cM_{n-1}$;
        \item After $n-2$ further applications, we obtain
            \begin{align*}
                f_{1}(l+n-1,m_1)\in \cF^{(n-1)},
            \end{align*}
        for all $m_1\in \cM_{1}$;
        \item A final application yields $0=f_0\in \cF^{(n)}$.
    \end{itemize}
    
    Since $n\geq 1$ is arbitrary, we conclude that $0\in \cF^{(n)}$ for all $n\geq 1$. Hence, $\cF$ has an infinite accumulation point.
    \end{proof}
\end{proposition}

We are now ready to prove Proposition \ref{pre-main-theorem}.

\begin{proof}[Proof of Proposition \ref{pre-main-theorem}]
    We define a subset of $\mathrm{Vol}(2,\cC_B)$ by
    \begin{align*}
        \cW:=&\{\mathrm{vol}(Z, K_Z+B_Z)| (Z, B_Z) \mathrm{~corresponds~to~} (p_1,\cdots,q_n)\in \cS\cN\cP_n\}\\
        &\cup \{\mathrm{vol}(X, K_X+B)\}.
    \end{align*}
    
    There is an isometric (with respect to the Euclidean metric on $\bR$) bijection between $\cF$ and $\cW$. In particular, this bijection is also a bijection between $\cF^{(n)}$ and $\cW^{(n)}$ for all $n\geq 1$.

    By Lemma~\ref{fuzhu}, $\cF$ has an infinite accumulation point at $0$. Therefore $\cW$ has an infinite accumulation point at $\mathrm{vol}(X,K_X+B)$, and hence $\mathrm{vol}(X,K_X+B)$ is an infinite accumulation point of $\mathrm{Vol}(2,\cC_B)$.
\end{proof}

\subsection{Proof of Theorems \ref{Main-theorem},~\ref{Main-theorem-geometric} and Corollary \ref{Corollary}}

\begin{proof}[Proof of Theorem \ref{Main-theorem-geometric}]
    Suppose that $(X,B), B_0$ and $x$ satisfy conditions of Theorem \ref{Main-theorem-geometric}. By the LMMP for surfaces (for example, \cite[Theorem 1.1]{Fujino12MMP}), there exists a birational morphism $f\colon Y\to X$ such that
        \begin{itemize}
            \item $Y$ is $\bQ$-factorial;
            \item $K_Y+B_Y:=f^*(K_X+B)$ is big and nef;
            \item $E\subset Y$ is the only exceptional divisor of $f$. It satisfies that $a(E,X,B)=0$ and $\mathrm{center}_X(E)=x$;
            \item $f^{-1}_*B_0$ and $E$ intersect transversally at a smooth point $y\in Y$.
        \end{itemize}
        Thus $(Y, B_Y),f^{-1}_*B_0, E, y$ satisfy the conditions of Proposition \ref{pre-main-theorem} and $B_Y\in \cC_B$. By Proposition \ref{pre-main-theorem}, 
        \begin{align*}
            \mathrm{vol}(X, K_X+B)=\mathrm{vol}(Y, K_Y+B_Y)\in \mathrm{Vol}^{(\infty)}(2, \cC_B).
        \end{align*}
        This complete the proof.
\end{proof}

\begin{proof}[Proof of Theorem \ref{Main-theorem}]
    Since $\mathrm{Vol}(d, \{0\})$ is a subset of $\mathrm{Vol}(d,\cC)$ for any DCC set $\cC$, it suffices to show that the set $\mathrm{Vol}^{(\infty)}(d, \{0\})$ is non-empty for $d\geq 2$.
    
    We first treat the case $d=2$.
    Consider the surface pair $(\bP^2, \sum_{k=1}^4 L_k)$, where the $L_k$ are four distinct lines in general position. It satisfies the conditions of Theorem \ref{Main-theorem-geometric} and hence $1\in \mathrm{Vol}^{(\infty)}(2, \{0,1\})$. By \cite[Theorem 1.1]{AlexeevLiu19accpoint}, 
    \begin{align*}
        \mathrm{Vol}^{(1)}(2, \{0\})=\mathrm{Vol}^{(1)}(2, \{0,1\}).
    \end{align*}
    Therefore $1\in \mathrm{Vol}^{(\infty)}(2, \{0\})$.

    Now we assume $d\geq 3$. Let $X_{d+1}$ be a general hypersurface of degree $d+1$ in $\bP^{d-1}$. Then $\dim X_{d+1}=d-2$ and $K_{X_{d+1}}=\cO_{X_{d+1}}(1)$ is ample. 

    For any surface $Z\in\cS(2,\{0\})$, the product $Z\times X_{d+1}$ is a $d$-dimensional log canonical variety of log general type. By the product formula for volumes,
    \begin{align*}
        \mathrm{vol}(Z\times X_{d+1},K_{Z\times X_{d+1}})=\binom{d}{2}\cdot \mathrm{vol}(Z,K_Z)\cdot \mathrm{vol}(X_{d+1},K_{X_{d+1}}).
    \end{align*}
    Since $\mathrm{vol}(X_{d+1},K_{X_{d+1}})=d+1$, we obtain 
    \begin{align*}
        \mathrm{vol}(Z\times X_{d+1},K_{Z\times X_{d+1}})=\frac{d(d-1)(d+1)}{2}\cdot \mathrm{vol}(Z,K_Z).
    \end{align*}

    Hence
    \begin{align*}
        \frac{d(d-1)(d+1)}{2}\cdot\mathrm{Vol}(2,\{0\}) \subseteq \mathrm{Vol}(d,\{0\}).
    \end{align*}
    Since $1\in\mathrm{Vol}^{(\infty)}(2,\{0\})$, we have $\frac{d(d-1)(d+1)}{2}\in \mathrm{Vol}^{(\infty)}(d, \{0\})$. Hence $\mathrm{Vol}^{(\infty)}(d,\{0\})$ is nonempty as required.
\end{proof}

\begin{proof}[Proof of Corollary \ref{Corollary}]
    By Theorem \ref{Main-theorem}, there exists $M_d\in \mathrm{Vol}^{(\infty)}(d,\{0\})$ such that the set $\mathrm{Vol}^{(\infty)}(d,\{0\})\cap (0,M_d]$ is nonempty. In particular, $\mathrm{Vol}^{(n)}(d,\{0\})\cap (0,M_d]$ is non-empty for any $n\geq 1$.
    
    Since $\mathrm{Vol}(d,\{0\})$ is a DCC set, for any $n\geq 1$,
    \begin{align*}
        (\mathrm{Vol}(d,\{0\}) \cap (0,M_d])^{(n)}=\mathrm{Vol}^{(n)}(d,\{0\}) \cap (0,M_d].
    \end{align*}
    This can be proved by induction on $n$, using the DCC property in the case $n=1$, so we omit the proof.

    Thus the set $(\mathrm{Vol}(d, \{0\})\cap (0,M_d])^{(n)}$ is nonempty for any $n\geq 1$. Therefore $\mathrm{Vol}(d,\{0\})\cap (0,M_d]$ has infinite accumulation complexity and so does $\mathrm{Vol}(d, \cC)\cap (0,M_d]$ for any DCC set $\cC\subseteq [0,1]$.
\end{proof}

\section{Further discussions}

As an application, we show that an analogue of
\cite[Theorem~1.4(1)]{chenhanliu2024iitakavolumeslogcanonical} fails for Iitaka dimension $\kappa\ge2$. 
Here $\mathrm{Ivol}^{\cC}_{\mathrm{lc}}(d,\kappa)$
denotes the set of Iitaka volumes of $d$-dimensional
log canonical pairs with Iitaka dimension $\kappa$
and coefficients in $\cC$
(see \cite[Introduction]{chenhanliu2024iitakavolumeslogcanonical}).

\begin{corollary}\label{CorIitaka}
    For integers $d$ and $\kappa$ such that $2\leq \kappa\leq d$, $\mathrm{Ivol}^{\{0\}}_{\mathrm{lc}}(d,\kappa)$ has an infinite accumulation point. 
    \begin{proof}
        The $\kappa=d$ case follows from Theorem \ref{Main-theorem}. 
    
    Now assume $2\leq \kappa<d$. Let $X$ be a $\kappa$-dimensional log canonical variety with $K_X$ ample, and let $E$ be a $(d-\kappa)$-dimensional smooth Calabi-Yau variety. Consider the product $X\times E$.

    Since $K_E\sim 0$, we have $K_{X\times E}=p_X^*K_X$, where $p_X$ is the projection onto $X$. Hence $\kappa(X\times E)=\kappa$ and
    \begin{align*}
        \mathrm{vol}_\kappa(X\times E,K_{X\times E})=\mathrm{vol}(X,K_X),
    \end{align*}
    where $\mathrm{vol}_\kappa$ denotes the Iitaka volume
(see \cite[Introduction]{chenhanliu2024iitakavolumeslogcanonical}).

    Therefore
    \begin{align*}
        \mathrm{Vol}(\kappa,\{0\})\subseteq \mathrm{Ivol}^{\{0\}}_{\mathrm{lc}}(d,\kappa).
    \end{align*}
    Since $\mathrm{Vol}(\kappa,\{0\})$ has an infinite accumulation point, so does $\mathrm{Ivol}^{\{0\}}_{\mathrm{lc}}(d,\kappa)$.
    \end{proof}
\end{corollary}

It is natural to ask whether the presence of a non-klt locus described in Theorem~\ref{Main-theorem-geometric} is also necessary for a volume to be an infinite accumulation point. 

\begin{question}
    Let $v_\infty$ be an infinite accumulation point of $\mathrm{Vol}(2,\{0\})$. Does there exist a log canonical surface $(X,B)\in\cS(2,\{0,1\})$ such that
    \begin{enumerate}
        \item $K_X+B$ is ample and $v_\infty=\mathrm{vol}(X, K_X+B)$;
        \item some irreducible component of $B^{=1}$ contains a zero-dimensional non-klt centre of $(X,B)$?
    \end{enumerate}
\end{question}

We pose the following question concerning the explicit birational geometry.

\begin{question}
    What is the minimal infinite accumulation point $m_2$ of $\mathrm{Vol}(2,\{0\})$? 
\end{question}

By \cite[Theorem 1.1]{LiuLiu25minimal}, the minimal accumulation point of $\mathrm{Vol}(2, \{0\})$ is $\frac{1}{825}$. Therefore, $m_2>\frac{1}{825}$.

On the other hand, by \cite[Theorem 1.4]{LiuShokurov2023optimal}, the minimal volume of log canonical surface pairs $(X,B)$ such that $K_X+B$ is ample and $B$ is a nonzero reduced divisor is $\frac{1}{462}$. Moreover, \cite[Lemma 3.5]{LiuLiu25minimal} shows that in the extremal case $(X,B)$ is plt and no closed point on $B$ is a non-klt centre of $(X,B)$. Hence such pairs do not satisfy the conditions in Theorem \ref{Main-theorem-geometric}.

Motivated by \cite[Lemma 3.5]{LiuLiu25minimal}, we expect that $m_2>\frac{1}{462}$.

\bibliographystyle{alpha}
\bibliography{ref/Dissertation_full}  

\end{document}